\newtheorem{theorem}{Theorem}
\newtheorem{lemma}{Lemma}
\newtheorem*{theorem*}{Theorem}
\newtheorem*{corol*}{Corollary}
\newtheorem{corollary}{Corollary}
\author{Vaibhav Pandey, Sagar Shrivastava, and Balasubramanian Sury}
\title{A Dedekind Domain with Nontrivial Class Group}
\date{}
\begin{document}
\maketitle

\begin{abstract}
We show that the ring of real-analytic functions on the unit circle
is a Dedekind domain with class number two.
\end{abstract}

\section{Rings that engage analysts.}

\noindent Analytic properties of function spaces over the real and
the complex fields are in some ways different. This is strongly
reflected in these spaces' algebraic properties. For instance, the
ring of real-valued continuous functions on a closed interval such
as $[0,1]$ behaves similarly to the corresponding ring of
complex-valued functions; they depend only on the topology of
$[0,1]$. The ring of real-valued polynomial functions on the unit
circle can be identified with the ring of all real trigonometric
polynomials. It is not a unique factorization domain as is
demonstrated by the equation
$$\cos^2(t) = (1+ \sin(t))(1- \sin(t)).$$
In fact, the above ring is $\mathbf{R}[X,Y]/(X^2+Y^2-1)$ and the
equation $Y^2 = (1+X)(1-X)$ that holds in the quotient ring gives
two different decompositions of $Y^2$ into irreducible elements
$Y,1+X,1-X$. On the other hand, the ring
$\mathbf{C}[X,Y]/(X^2+Y^2-1) \cong \mathbf{C}[X+iY, 1/(X+iY)]$ is a
principal ideal domain. Again, the rings of convergent power series
(over either of these fields) with radius of convergence larger than
some positive real number $\rho$ is a Euclidean domain (and hence, a
principal ideal domain)\textemdash this can be seen by using the
function that counts zeroes (with multiplicity) in the disk $|z|
\leq \rho$ as a Euclidean "norm" function (see \cite{jensen}).
\vskip 3mm

\noindent In this note, we consider the rings
$C_{an}(S^1;\mathbf{R})$ of analytic functions on the unit circle
$\mathit{S}^1$ that are real-valued and the corresponding ring
$C_{an}(S^1; \mathbf{C})$ of analytic functions that are
complex-valued. We will see that the latter is a principal ideal
domain while the former is a Dedekind domain which is not a
principal ideal domain---the class group having order $2$. \vskip
5mm

\section{Maximal ideals are points.}

\noindent The proof of the fact alluded to is exactly the same as
the corresponding proof (that is well known) for the ring of
continuous functions on a closed interval.

\begin{lemma}
Maximal ideals of $C_{an}(S^1, \mathbf{R})$ and of $C_{an}(S^1,
\mathbf{C})$ are points.
\end{lemma}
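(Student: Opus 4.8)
The plan is to mimic the classical argument for $C([0,1];\mathbf{R})$: writing $R$ for either $C_{an}(S^1;\mathbf{R})$ or $C_{an}(S^1;\mathbf{C})$, I want to show that every maximal ideal $\mathfrak{m}$ of $R$ has the form $\mathfrak{m}_p := \{f \in R : f(p) = 0\}$ for some $p \in S^1$. In one direction this is easy: evaluation at $p$ is a ring surjection $R \to \mathbf{R}$ (resp.\ $\mathbf{C}$) with kernel $\mathfrak{m}_p$, so each $\mathfrak{m}_p$ is already maximal. For the converse it suffices to produce, given an arbitrary maximal ideal $\mathfrak{m}$, a point $p$ at which every $f \in \mathfrak{m}$ vanishes; then $\mathfrak{m} \subseteq \mathfrak{m}_p \subsetneq R$ forces $\mathfrak{m} = \mathfrak{m}_p$ by maximality.

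First I would argue by contradiction: suppose no such common zero exists. Then for each $p \in S^1$ we may choose $f_p \in \mathfrak{m}$ with $f_p(p) \neq 0$, and by continuity $f_p$ is nonvanishing on an open arc $U_p \ni p$. Since $S^1$ is compact, finitely many $U_{p_1}, \dots, U_{p_n}$ cover it. Now set $g := \sum_{j=1}^n f_{p_j}\overline{f_{p_j}}$; in the real case this is just $\sum_j f_{p_j}^2$, while in the complex case we use that $\overline{f} \in R$ whenever $f \in R$, since the real and imaginary parts of a real-analytic function are themselves real-analytic. In either case $g \in \mathfrak{m}$, and $g$ is real-analytic and strictly positive at every point of $S^1$.

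The key remaining point — and the one I expect to be the only genuine obstacle — is that such a nowhere-vanishing $g$ is a \emph{unit} of $R$, i.e.\ that $1/g$ is again real-analytic. This is purely local: near any point $g$ is given by a convergent power series with nonzero constant term, and the reciprocal of such a series is again a convergent power series (alternatively, complexify $g$ to a holomorphic function on a neighborhood in $\mathbf{C}$ and invoke that the reciprocal of a nonvanishing holomorphic function is holomorphic). Granting this, $g \in \mathfrak{m}$ is invertible, contradicting $\mathfrak{m} \neq R$. Hence a common zero $p$ exists and $\mathfrak{m} = \mathfrak{m}_p$, which proves the lemma for both rings at once; the whole argument is uniform in the base field, the only field-dependent ingredient being the use of complex conjugation in $R$ when the scalars are $\mathbf{C}$.
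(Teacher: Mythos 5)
Your proposal is correct and follows essentially the same argument as the paper: use compactness of $S^1$ to cover it by arcs on which finitely many $f_{p_j} \in \mathfrak{m}$ are nonvanishing, form $\sum_j f_{p_j}\overline{f_{p_j}} \in \mathfrak{m}$, and conclude it is a unit since it vanishes nowhere. Your extra justification that the reciprocal of a nowhere-vanishing real-analytic function is again real-analytic fills in a step the paper leaves implicit, but the route is the same.
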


\begin{proof}
This is a consequence of the compactness of $\mathit{S}^1$. Indeed,
for each point $p \in S^1$, the ideal
$$\mathfrak m_p := \{f : f(p)=0 \}$$
is maximal as the quotient is isomorphic to a field. Let us observe
that every maximal ideal $\mathfrak{m}$ is of the form $\mathfrak
m_p$ for some $p$ in $\mathit{S}^1$. If not, then we can find
functions $f_i$ in $\mathfrak{m}$ that do not vanish in a
neighborhood of $p_i$ for each $p_i$ in $\mathit{S}^1$ by
continuity. These neighborhoods cover $\mathit{S}^1$. By compactness
of $\mathit{S}^1$, finitely many of these neighborhoods cover it.
Call these $f_1, \ldots, f_n$. Then the function $\sum_{i=1}^n
\overline{f_i}f_i$ lies in $\mathfrak{m}$ and is a unit (as it does
not vanish anywhere). This is a contradiction as maximal ideals are
proper.
\end{proof}

\noindent
\begin{lemma}
The ring $C_{an}(S^1, \mathbf{C})$ of complex-valued analytic
functions on $\mathit{S}^1$ is a PID. \end{lemma}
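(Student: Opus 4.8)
The plan is to work with a concrete model of $R := C_{an}(S^1,\mathbf{C})$. Viewing $S^1 = \{z\in\mathbf{C} : |z|=1\}$, a complex-valued real-analytic function on $S^1$ is exactly the restriction of a holomorphic function on some open annulus $A_\varepsilon = \{\,1-\varepsilon<|z|<1+\varepsilon\,\}$, two such restrictions agreeing in $R$ precisely when the holomorphic functions coincide on a common (connected) annulus, hence wherever both are defined. Two features of this model will be used throughout. First, a nonzero element of $R$ extends to a holomorphic function on some $A_\varepsilon$ that is not identically zero; since a product of two such is again not identically zero, $R$ is a domain, and since $A_\varepsilon$ is connected and $S^1$ is compact, a nonzero element of $R$ has only finitely many zeros on $S^1$, each of finite order. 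Second, the functions $z\mapsto z-p$ ($p\in S^1$) lie in $R$, and if $g,h\in R$ with $h\ne 0$ are such that $g/h$ is holomorphic across every zero of $h$ on $S^1$, then shrinking the annulus to avoid the finitely many zeros of $h$ off $S^1$ shows that $g/h$ again represents an element of $R$, so $g\in hR$. Finally, by the previous Lemma every proper ideal of $R$ lies in some $\mathfrak{m}_p$; equivalently an ideal whose members have no common zero on $S^1$ is all of $R$ (alternatively, if $g_1,\dots,g_m\in R$ have no common zero then $\sum_i\overline{g_i}g_i\in R$ is nowhere zero, hence a unit).

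Now fix a nonzero ideal $I\subseteq R$; I aim to produce a single generator. Choosing $0\ne f_0\in I$, the common zero set $Z(I)$ of $I$ is contained in the finite zero set of $f_0$, so $Z(I)=\{p_1,\dots,p_k\}$ is finite, and empty only when $I=R$ by the last remark. For each $i$ set $n_i:=\min\{\,\mathrm{ord}_{p_i}(f): 0\ne f\in I\,\}\ge 1$, the smallest order of vanishing at $p_i$ occurring in $I$ (finite since $n_i\le\mathrm{ord}_{p_i}(f_0)$), and put
$$h(z):=\prod_{i=1}^{k}(z-p_i)^{n_i}\in R.$$
I claim $I=hR$.

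The inclusion $I\subseteq hR$ is immediate: any $g\in I$ has $\mathrm{ord}_{p_i}(g)\ge n_i=\mathrm{ord}_{p_i}(h)$ at each $p_i$, while the remaining zeros of $h$ lie off $S^1$, so $g/h\in R$ by the second remark. The reverse inclusion reduces to showing $h\in I$, which is the crux. Pick $f^{(i)}\in I$ with $\mathrm{ord}_{p_i}(f^{(i)})=n_i$; for a linear combination $f=\sum_i c_i f^{(i)}\in I$ the order of vanishing at each $p_i$ is always $\ge n_i$, and equals $n_i$ at a given $p_i$ unless $(c_i)$ lies on a proper linear subspace, so a generic choice of the $c_i$ gives $f\in I$ with $\mathrm{ord}_{p_i}(f)=n_i$ for all $i$ simultaneously. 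Then $v:=f/h\in R$ does not vanish at any $p_i$, and its finitely many zeros $q_1,\dots,q_s$ on $S^1$ all lie outside $Z(I)$; hence for each $j$ some member of $I$ is nonzero at $q_j$, and a further generic combination of $f$ with such members yields $g\in I$ having $\mathrm{ord}_{p_i}(g)=n_i$ for all $i$ and $g(q_j)\ne0$ for all $j$, so that $w:=g/h\in R$ vanishes at none of the $q_j$ (nor any $p_i$). Now $v$ and $w$ have no common zero on $S^1$, so $vR+wR=R$; writing $1=av+bw$ with $a,b\in R$ and multiplying by $h$ gives $h=af+bg\in I$. Together with the trivial case $I=0$, this shows every ideal of $R$ is principal.

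The step I expect to be the main obstacle is precisely the inclusion $hR\subseteq I$, that is, manufacturing $h$ itself inside $I$: one must arrange two elements of $I$ whose "reduced parts" $f/h$ and $g/h$ have disjoint zero sets on $S^1$, which is what forces the genericity arguments above and uses the finiteness of every zero set in sight, and one must verify that these quotients genuinely define elements of $R$ — the one place where the passage from holomorphic functions on a fixed annulus to germs along $S^1$ (shrinking to discard off-circle zeros) actually matters. Everything else is routine bookkeeping with orders of vanishing and the identity theorem.
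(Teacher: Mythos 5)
Your proof is correct and follows essentially the same route as the paper: both identify the common zeros $p_i$ of $I$ together with the minimal vanishing orders $n_i$, take $h=\prod_i(z-p_i)^{n_i}$ as the candidate generator, obtain $I\subseteq hR$ by counting orders, and prove the reverse inclusion by showing that the quotient ideal $\{f/h : f\in I\}$ has no common zero and is therefore all of $R$. The only cosmetic difference is that where the paper argues abstractly (a proper ideal would lie in some $\mathfrak m_p$, which is ruled out point by point), you exhibit two explicit elements $v,w$ of that quotient ideal with disjoint zero sets via generic linear combinations and conclude with a B\'ezout identity.
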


\begin{proof}
Clearly, the maximal ideal $\mathfrak{m}_p$ is the principal ideal
generated by $z-p$. Hence, every finite product
$$\mathfrak{m}_{p_1}^{a_1} \cdots \mathfrak{m}_{p_k}^{a_k}$$
is principal. We show that every nonzero ideal is such a finite
product. Any nonzero analytic function has only finitely many zeroes
as zeroes are isolated and $S^1$ is compact. Hence, if $I$ is any
nonzero ideal, it has only finitely many common zeroes, say $p_1,
\ldots, p_k$. Let $a_i$ be the smallest positive integer such that
every element of $I$ has a zero of order at least $a_i$ at $p_i$.
Hence, for each $0 \neq f \in I$, we have $f = \bigg(\prod_{i=1}^k
(z-p_i)^{a_i} \bigg) g$ for some analytic function $g$. In other
words, $I$ is contained in $\mathfrak{m}_{p_1}^{a_1} \cdots
\mathfrak{m}_{p_k}^{a_k}$. Then
$$J := \{f/\prod_{i=1}^k (z-p_i)^{a_i} : f \in I \}$$ is an ideal.
If $J$ were a proper ideal, it would be contained in some
$\mathfrak{m}_p$. If $p \not\in \{p_1, \ldots, p_k \}$, then $I
\subset \mathfrak{m}_p$ which contradicts the fact that $p_1,
\ldots, p_k$ are the only common roots of $I$. Hence $p=p_i$ for
some $1 \leq i \leq k$. But if $f_i \in I$ has order exactly $a_i$
at $p_i$, then $f_i/\prod_{j=1}^k (z-p_j)^{a_j}$ cannot vanish at
$p_i$, a contradiction. Hence $J$ is the unit ideal and so
$$I = \mathfrak{m}_{p_1}^{a_1} \cdots \mathfrak{m}_{p_k}^{a_k}.$$
Hence $I$ is principal. So, $C_{an}(S^1, \mathbf{C})$ is a PID (and
hence a Dedekind domain).
\end{proof}
\vskip 5mm

\section{Ideals in $C_{an}(S^1;\mathbf{R})$.}

\noindent Let us recall that a real-analytic function in
$C_{an}(S^1;\mathbf{R})$ is a function such that $f \circ g_1$ and
$f \circ g_{-1}$ are analytic where $g_1(x) = e^{2i \pi x}$ on $(0,2
\pi)$ and $g_{-1}(x) = e^{2i \pi x}$ on $(-\pi, \pi)$. Recall we
observed that maximal ideals are points. \vskip 5mm

\begin{lemma}
The product of any two maximal ideals $\mathfrak{m}_{p_1},
\mathfrak{m}_{p_2}$ (including the case $p_1=p_2$), is principal.
\end{lemma}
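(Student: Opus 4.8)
The plan is to produce an explicit principal generator using plane geometry. Identify $S^1$ with the unit circle in $\mathbf{R}^2$ and write $p_j=(\cos\theta_j,\sin\theta_j)$. If $p_1\neq p_2$, let $\ell(x,y)=ax+by+c$ be a nonconstant real affine function whose zero locus is the line through $p_1$ and $p_2$; two distinct points of a circle span a secant, never a tangent, line, so $\{\ell=0\}$ meets $S^1$ transversally in exactly the two points $p_1,p_2$, and hence $h:=\ell|_{S^1}$ has a simple zero at each of $p_1,p_2$ and no other zero. If $p_1=p_2=p$, take instead $\ell$ to be the tangent line to $S^1$ at $p$, so that $h:=\ell|_{S^1}$ vanishes only at $p$, to order exactly $2$. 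In both cases $h\in C_{an}(S^1;\mathbf{R})$ since it is the restriction of an affine function; concretely, with $\bar\theta=(\theta_1+\theta_2)/2$ and $\delta=(\theta_1-\theta_2)/2$ one may take $h(e^{it})=\cos(t-\bar\theta)-\cos\delta$, whose zeros on $S^1$ are precisely $t\equiv\theta_1$ and $t\equiv\theta_2$. The claim will be that $\mathfrak{m}_{p_1}\mathfrak{m}_{p_2}=(h)$.

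First I would verify $h\in\mathfrak{m}_{p_1}\mathfrak{m}_{p_2}$. When $p_1\neq p_2$ this is immediate: $\mathfrak{m}_{p_1}$ and $\mathfrak{m}_{p_2}$ are distinct maximal ideals, hence comaximal, so $\mathfrak{m}_{p_1}\mathfrak{m}_{p_2}=\mathfrak{m}_{p_1}\cap\mathfrak{m}_{p_2}$, and $h$ lies in this intersection since $h(p_1)=h(p_2)=0$. The case $p_1=p_2=p$ is the one genuine subtlety, and I expect it to be the main obstacle: membership in $\mathfrak{m}_p^2$ is strictly stronger than vanishing at $p$, so one must actually exhibit $h$ as a sum of products of elements of $\mathfrak{m}_p$. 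This is arranged by the identity $2(1-\cos s)=\sin^2 s+(1-\cos s)^2$: after rotating $S^1$ (a ring automorphism, so no loss of generality) we may assume $p=(1,0)$ and $h=\cos t-1$, and then $1-\cos t=\tfrac12\bigl(\sin^2 t+(1-\cos t)^2\bigr)$ displays $-h$ as an element of $\mathfrak{m}_p^2$, both $\sin t$ and $1-\cos t$ lying in $\mathfrak{m}_p$.

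For the reverse inclusion I would run the same ``isolated zeros on a compact set, so divide'' argument used for $C_{an}(S^1;\mathbf{C})$. Let $f\in\mathfrak{m}_{p_1}\mathfrak{m}_{p_2}$ and set $g:=f/h$ on $S^1$. Off $\{p_1,p_2\}$ the function $h$ is nowhere zero, so $g$ is real-analytic there. At each $p_j$, $f$ vanishes to order at least that of $h$: when $p_1\neq p_2$ this order is $1$ and $f(p_j)=0$; when $p_1=p_2=p$ this order is $2$ and $f$, being a sum of products of functions vanishing at $p$, vanishes to order at least $2$ there. Hence $g$ extends real-analytically across each $p_j$ — the extension being real-valued because $g$ is real-valued on a punctured neighbourhood — so $g\in C_{an}(S^1;\mathbf{R})$ and $f=gh\in(h)$. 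Combining the two inclusions gives $\mathfrak{m}_{p_1}\mathfrak{m}_{p_2}=(h)$, which is principal. Apart from the $\mathfrak{m}_p^2$-membership identity, every step is routine; the geometric picture (a chord of the circle, degenerating to the tangent line when $p_1=p_2$) is what makes the two cases uniform.
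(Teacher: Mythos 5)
Your proof is correct and uses essentially the same generator as the paper ($\cos(t-\tfrac{\theta_1+\theta_2}{2})-\cos(\tfrac{\theta_1-\theta_2}{2})$, i.e.\ the chord through $p_1,p_2$, degenerating to the tangent when $p_1=p_2$) together with the same ``divide by the generator'' argument for the inclusion $\mathfrak{m}_{p_1}\mathfrak{m}_{p_2}\subseteq(h)$. You actually go one step further than the paper: you explicitly verify that the generator lies in $\mathfrak{m}_p^2$ when $p_1=p_2=p$, via the identity $2(1-\cos t)=\sin^2 t+(1-\cos t)^2$ — a point the paper glosses over, since its factorization $2\sin^2(\tfrac{x-p}{2})$ exhibits a square of a function that is not itself $2\pi$-periodic and hence not an element of the ring.
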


\begin{proof}
In fact, it is easy to see that $\mathfrak{m}_{p_1}
\mathfrak{m}_{p_2}$ can be generated by the analytic function
$f_{p_1,p_2}(x) = \cos(x - \frac{(p_1+p_2)}{2})-
\cos(\frac{p_1-p_2}{2})$. To clarify this further, note that when
$p_1 \neq p_2$, the function $f_{p_1,p_2}$ has simple zeroes at
$p_1$ and $p_2$ and no other zeroes (consider the derivative). If
$p_1=p_2=p$, then the function $f_{p,p}(x) =  2
\sin^2(\frac{x-p}{2})$ has a double root at $p$ and no other roots.
In either case, it follows that any element $f \in
\mathfrak{m}_{p_1} \mathfrak{m}_{p_2}$ satisfies the property that
$\frac{f}{f_{p_1,p_2}}$ is analytic. This completes the proof.
\end{proof}

\noindent This immediately implies the following corollary. \vskip
2mm

\begin{corollary}
An ideal $I = \mathfrak m_{p_1}^{a_1} \mathfrak m_{p_2}^{a_2} \cdots
\mathfrak m_{p_n}^{a_n}$ is principal if $a_1+ \cdots + a_n$ is
even.
\end{corollary}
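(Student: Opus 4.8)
The plan is to deduce this directly from the previous lemma by pairing up the maximal ideals occurring in $I$, counted with multiplicity. Suppose first that $n \ge 1$ and write $a_1 + \cdots + a_n = 2m$. Form the multiset $S$ in which the point $p_i$ appears exactly $a_i$ times, so that $S$ has $2m$ elements. Since $2m$ is even, we may partition $S$ (arbitrarily) into $m$ unordered pairs $\{q_k, q_k'\}$, $1 \le k \le m$, where some $q_k$ may coincide with its partner $q_k'$ when the corresponding $a_i$ is at least $2$, and pairs coming from distinct $p_i$'s with odd exponents get matched across points.

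Next, for each $k$ the previous lemma supplies an analytic function $f_k := f_{q_k, q_k'}$ with $\mathfrak{m}_{q_k}\mathfrak{m}_{q_k'} = (f_k)$. Since a finite product of principal ideals is principal, namely $(f_1)(f_2)\cdots(f_m) = (f_1 f_2 \cdots f_m)$, it suffices to check that $\prod_{k=1}^m \mathfrak{m}_{q_k}\mathfrak{m}_{q_k'} = I$. This is just the commutativity and associativity of ideal multiplication in a commutative ring: the left side is a product of $2m$ maximal ideals, and collecting equal factors according to how $S$ was defined regroups it precisely as $\mathfrak{m}_{p_1}^{a_1}\mathfrak{m}_{p_2}^{a_2}\cdots\mathfrak{m}_{p_n}^{a_n} = I$. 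Hence $I = (f_1 f_2 \cdots f_m)$ is principal. The degenerate case $n = 0$ (where $I$ is the unit ideal and the exponent sum is $0$) is trivial.

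I do not anticipate any genuine obstacle in this argument: it is a short bookkeeping reduction to Lemma~3 (the principality of a product of two maximal ideals), and it uses neither the Dedekind property of $C_{an}(S^1;\mathbf{R})$ nor anything about the specific generators $f_{p_1,p_2}$ beyond their existence. The only point worth stating carefully is the regrouping identity $\prod_{k} \mathfrak{m}_{q_k}\mathfrak{m}_{q_k'} = \mathfrak{m}_{p_1}^{a_1}\cdots\mathfrak{m}_{p_n}^{a_n}$, which is immediate once the multiset $S$ and its pairing are set up explicitly as above.
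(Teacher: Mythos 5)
Your proof is correct and is exactly the argument the paper intends: the paper gives no explicit proof, stating only that the corollary follows immediately from Lemma 3, and your pairing of the maximal ideals (with multiplicity) into $m$ pairs, each contributing a principal factor, is that immediate deduction spelled out.
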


\begin{lemma}
An ideal $I = \mathfrak m_{p_1}^{a_1} \mathfrak m_{p_2}^{a_2} \cdots
\mathfrak m_{p_n}^{a_n}$ is not principal if $a_1+ \cdots+a_n$ is
odd.
\end{lemma}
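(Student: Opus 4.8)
\noindent The plan is to reduce the claim to the single assertion that a maximal ideal $\mathfrak m_p$ is, by itself, \emph{not} principal, and then to contradict a hypothetical generator of it by a parity argument. For the reduction, suppose $a_1+\cdots+a_n$ is odd and, towards a contradiction, that $I=(f)$ is principal; reorder so $a_1\geq 1$. Then $J:=\mathfrak m_{p_1}^{a_1-1}\mathfrak m_{p_2}^{a_2}\cdots\mathfrak m_{p_n}^{a_n}$ has \emph{even} exponent sum, so by the Corollary above it is principal (trivially so if $J$ is the unit ideal), say $J=(g)$ with $g\neq 0$, and $I=J\,\mathfrak m_{p_1}=(g)\,\mathfrak m_{p_1}$. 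Since $C_{an}(S^1;\mathbf R)$ is an integral domain --- a nonzero analytic function on the compact set $S^1$ has only finitely many zeros --- we may cancel the principal ideal $(g)$: from $f\in I\subseteq(g)$ write $f=gf'$ with $f'\in C_{an}(S^1;\mathbf R)$, so that $(g)(f')=(gf')=(f)=(g)\,\mathfrak m_{p_1}$ and hence $(f')=\mathfrak m_{p_1}$. So it suffices to show that $\mathfrak m_p$ is never principal.

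Suppose then that $\mathfrak m_p=(f')$ (so $f'\neq 0$). By the lemma on products of two maximal ideals, $\mathfrak m_p^2=(f_{p,p})$, where $f_{p,p}(x)=2\sin^2\!\big(\tfrac{x-p}{2}\big)=1-\cos(x-p)$ vanishes only at $p$, there to order $2$. As $f_{p,p}\in\mathfrak m_p=(f')$, the ratio $f_{p,p}/f'$ is analytic; since $f_{p,p}=(f_{p,p}/f')\,f'$, every zero of $f'$ is a zero of $f_{p,p}$, so $f'$ vanishes only at $p$ and $\operatorname{ord}_p f'\leq \operatorname{ord}_p f_{p,p}=2$. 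Now the crux: $S^1\setminus\{p\}$ is connected and $f'$ is continuous and nowhere zero there, hence of constant sign on it; but in a local real-analytic coordinate $t$ at $p$ (with $t(p)=0$) one has $f'=t^{a}v(t)$ with $a=\operatorname{ord}_p f'\geq 1$ and $v(0)\neq 0$, so if $a$ were odd then $f'$ would take opposite signs on the two sides $\{t>0\}$ and $\{t<0\}$ of $p$, both of which lie in $S^1\setminus\{p\}$ --- impossible. Hence $a$ is even, and with $a\leq 2$ we get $a=2$. But then $f'/f_{p,p}$ is analytic and nowhere vanishing, i.e.\ a unit, so $(f')=(f_{p,p})=\mathfrak m_p^2$ and therefore $\mathfrak m_p=\mathfrak m_p^2$. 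This is absurd: for any $q\neq p$, the generator $f_{p,q}\in\mathfrak m_p$ has a simple zero at $p$, so $f_{p,q}/f_{p,p}$ has a pole at $p$ and cannot be an element of the ring, whence $f_{p,q}\notin(f_{p,p})=\mathfrak m_p^2$; thus $\mathfrak m_p\neq\mathfrak m_p^2$, the desired contradiction.

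I expect the reduction and the ideal bookkeeping to be routine; the one genuinely essential point is the parity observation in the middle step --- that a $2\pi$-periodic real-analytic function cannot have exactly one zero of odd order --- which is just connectedness of the punctured circle together with the intermediate value theorem. This is precisely where the ring's being real rather than complex makes itself felt: the ``obvious'' square root $\sin\!\big(\tfrac{x-p}{2}\big)$ of $f_{p,p}$, which would generate $\mathfrak m_p$ in the complex ring, is not $2\pi$-periodic and hence does not belong to $C_{an}(S^1;\mathbf R)$, and the argument above shows that no genuine element of the ring can take its place.
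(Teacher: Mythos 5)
Your proof is correct and follows essentially the same route as the paper: reduce to the non-principality of a single maximal ideal by factoring out the even part and cancelling the principal ideal $(g)$, then rule out a generator of $\mathfrak m_p$ by the parity of its vanishing order, which rests on the intermediate value theorem on the punctured circle. Your version merely makes explicit (via $\mathfrak m_p^2=(f_{p,p})$ and the order bound) what the paper compresses into the remark that every analytic function on $S^1$ has an even number of zeroes counted with multiplicity.
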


\begin{proof}
We first show that maximal ideals in the ring
$C_{an}(S^1;\mathbf{R})$ are not principal. This is obvious because
identifying $S^1$ with $\mathbf{R}/2 \pi \mathbf{Z}$, the number of
zeroes of any analytic function on $S^1$ counted with multiplicity
is even---this is simply because of the intermediate value theorem.
Now, if $I = \mathfrak m_{p_1}^{a_1} \mathfrak m_{p_2}^{a_2} \cdots
\mathfrak m_{p_n}^{a_n}$ with $a_1 + \cdots + a_n$ odd, then $I =
\mathfrak{m}_{p_1} (g)$ by the even case. If $I= (f)$, then $f \in
g\mathfrak{m}_{p_1} \subset (g)$ so that $f/g$ is analytic. But then
$\mathfrak{m}_{p_1} = (f/g)$, which is a contradiction, as $f/g$ has
an even number of zeroes counting multiplicity, while
$\mathfrak{m}_{p_1}$ has only a common zero at $p_1$.
\end{proof}
\vskip 3mm

\noindent Finally, we have the following factorization result.
\vskip 2mm

\begin{theorem}
Every nonzero proper ideal in the ring $C_{an}(S^1;\mathbf{R})$ is
of the form $\mathfrak m_{p_1}^{a_1} \mathfrak m_{p_2}^{a_2} \ldots
\mathfrak m_{p_n}^{a_n}$ for points $p_1, \cdots, p_n$.
\end{theorem}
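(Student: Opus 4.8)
The plan is to follow the proof of the complex PID lemma, modifying the one step that fails over $\mathbf{R}$. Let $R:=C_{an}(S^1;\mathbf{R})$ and let $I$ be a nonzero proper ideal. Being proper, $I$ lies in some maximal ideal, which by the first lemma is an $\mathfrak{m}_q$; being nonzero, it contains some $f_0\neq0$, whose zero set is finite since zeros are isolated and $S^1$ is compact. Hence the common zero set of $I$ is a nonempty finite set $\{p_1,\dots,p_n\}$, and for each $i$ we let $a_i\geq1$ be the least order of vanishing at $p_i$ of a nonzero element of $I$; this is finite because a nonzero analytic function vanishes to finite order. The first thing I would establish is the elementary identity $\mathfrak{m}_p^{\,a}=\{f\in R:\operatorname{ord}_p f\geq a\}$, obtained by induction from the lemma that $\mathfrak{m}_p^2=(f_{p,p})$ (with $f_{p,p}$ having a double zero at $p$ and no other zero) together with the already-used fact that, for analytic functions, $g\mid f$ exactly when $\operatorname{ord}_q f\geq\operatorname{ord}_q g$ at every zero $q$ of $g$. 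Granting this, each element of $I$ lies in every $\mathfrak{m}_{p_i}^{\,a_i}$, so $I\subseteq\bigcap_i\mathfrak{m}_{p_i}^{\,a_i}$; and since distinct points give comaximal maximal ideals, hence comaximal powers, the Chinese Remainder Theorem identifies this intersection with $\prod_i\mathfrak{m}_{p_i}^{\,a_i}=:M$. The entire problem then becomes: show $I=M$.

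This is exactly where the real case parts ways with the complex one. In the complex PID lemma one divides $I$ by the generator $\prod(z-p_i)^{a_i}$ of $M$, but on $S^1$ there need be no analytic function with zero set precisely $\{p_1,\dots,p_n\}$ and orders $a_1,\dots,a_n$: such a function has $\sum a_i$ zeros counted with multiplicity, and $\sum a_i$ may be odd --- the very obstruction recorded in the previous lemma through the intermediate value theorem. This is the crux, and I would sidestep it by squaring. The ideal $M^2=\prod_i\mathfrak{m}_{p_i}^{\,2a_i}$ has even exponent sum, so by the corollary it is principal, and in fact $M^2=(F)$ with $F:=\prod_i f_{p_i,p_i}^{\,a_i}$, a function vanishing to order exactly $2a_i$ at each $p_i$ and nowhere else. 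Since $I\subseteq M$ we have $IM\subseteq M\cdot M=(F)$, so $\{h/F:h\in IM\}$ is an honest ideal of $R$; I would show it is all of $R$ by checking it meets no $\mathfrak{m}_q$. For $q\notin\{p_1,\dots,p_n\}$ pick $f\in I$ with $f(q)\neq0$ (possible since $q$ is not a common zero); then $f^2/F$ lies in that ideal and $(f^2/F)(q)\neq0$ because $F(q)\neq0$. For $q=p_i$ pick $f_i\in I$ of order exactly $a_i$ at $p_i$; then $f_i^2/F$ lies in the ideal and has order $2a_i-2a_i=0$ at $p_i$, so it does not vanish there. Hence $F\in IM$, that is, $IM=(F)=M^2$.

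To finish I would cancel $M$. Each $\mathfrak{m}_p$ is an invertible fractional ideal, since $\mathfrak{m}_p^2=(f_{p,p})$ is principal and nonzero and so $\mathfrak{m}_p\cdot(f_{p,p}^{-1}\mathfrak{m}_p)=R$; therefore the product $M$ is invertible, hence cancellable. Cancelling $M$ in $IM=M^2=M\cdot M$ yields $I=M=\mathfrak{m}_{p_1}^{\,a_1}\cdots\mathfrak{m}_{p_n}^{\,a_n}$, which is the assertion. The delicate points I anticipate are the bookkeeping behind $\mathfrak{m}_p^{\,a}=\{\operatorname{ord}_p\geq a\}$ and the verification that $\{h/F:h\in IM\}$ avoids every maximal ideal; the real idea is simply to pass to $M^2$, which is automatically principal, and then strip off the easily invertible factor $M$ in place of the direct division available over $\mathbf{C}$.
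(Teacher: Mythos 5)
Your proof is correct, but it takes a genuinely different route from the paper's. The paper proves containment $\mathcal{I}\subseteq M:=\prod_i\mathfrak m_{p_i}^{a_i}$ just as you do, but then establishes the reverse inclusion by an explicit construction: using the intermediate value theorem it tracks the sign changes of an element $f\in\mathcal I$ of minimal orders across the arcs between the $p_i$, multiplies by auxiliary functions $g$ and $h\phi$ to cancel the unwanted sign changes, and produces $fg+ch\phi\in\mathcal I$ (for a large constant $c$) vanishing exactly to order $a_i$ at each $p_i$ and nowhere else; this settles the case $\sum a_i$ even, and the odd case is reduced to it by adjoining one extra zero $q$. You instead sidestep the parity obstruction entirely by passing to $M^2$, which is automatically principal with the explicit generator $F=\prod_i f_{p_i,p_i}^{a_i}$, re-running the division argument from the complex PID lemma on $IM\subseteq M^2=(F)$ to conclude $IM=M^2$, and then cancelling the invertible ideal $M$. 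Both arguments are sound and non-circular (Corollary 1 and the invertibility of $\mathfrak m_p$ rest only on Lemma 3). The paper's approach is more elementary and constructive---it exhibits an actual generator of $M$ inside $\mathcal I$ in the even case---at the cost of delicate sign bookkeeping; yours is shorter and more structural, needing no case split on the parity of $\sum a_i$, but it leans on two pieces of routine machinery you rightly flag as the delicate points: the identity $\mathfrak m_p^{\,a}=\{f:\operatorname{ord}_p f\geq a\}$ (whose inductive verification via $\mathfrak m_p^2=(f_{p,p})$ does go through) and the comaximality/CRT step identifying $\bigcap_i\mathfrak m_{p_i}^{a_i}$ with the product. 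One small point worth making explicit in your Case $q=p_i$: the quotient $f_i^2/F$ is analytic not just at $p_i$ but at every $p_j$, which follows since $f_i\in I\subseteq M$ forces $\operatorname{ord}_{p_j}f_i^2\geq 2a_j$; as written you get this for free from $f_i^2\in IM\subseteq(F)$, so the argument is complete.
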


\noindent Before proving this theorem, we observe the following very
interesting fact. \vskip 3mm

\begin{corollary}
$C_{an}(S^1;\mathbf{R})$ is a Dedekind domain which has class number $2$.
\end{corollary}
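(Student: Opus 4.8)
\noindent The plan is to deduce the Corollary formally from the Theorem together with Lemmas 3 and 4. First I would record that $R := C_{an}(S^1;\mathbf{R})$ is an integral domain: since $S^1$ is connected and the zeros of a nonzero real-analytic function are isolated, hence finite in number by compactness, a product of two nonzero elements of $R$ cannot vanish identically. By the Theorem, every nonzero proper ideal of $R$ is a product of maximal ideals. Invoking the classical characterization that an integral domain in which every nonzero proper ideal is a product of prime ideals is necessarily a Dedekind domain — the factorization then being automatically unique — one concludes that $R$ is a Dedekind domain; in particular its nonzero prime ideals are exactly the $\mathfrak{m}_p$, $p \in S^1$, since a prime containing a product $\prod_i \mathfrak{m}_{p_i}^{a_i}$ must contain, hence equal, one of the $\mathfrak{m}_{p_i}$.

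\noindent Next I would compute the ideal class group $\mathrm{Cl}(R)$. Every ideal class is represented by an integral ideal (given a fractional ideal $\mathfrak{a}$, pick $0 \neq x \in R$ with $x\mathfrak{a} \subseteq R$), and such an ideal is a product of the $\mathfrak{m}_p$ by the Theorem; hence $\mathrm{Cl}(R)$ is generated by the classes $[\mathfrak{m}_p]$. Lemma 3 gives $[\mathfrak{m}_p]\,[\mathfrak{m}_q] = [(f_{p,q})] = 1$ for all $p,q \in S^1$; taking $q = p$ shows $[\mathfrak{m}_p]^2 = 1$, and then $[\mathfrak{m}_q] = [\mathfrak{m}_p]^{-1} = [\mathfrak{m}_p]$, so all of these classes coincide with a single element $\sigma$ with $\sigma^2 = 1$. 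Thus $\mathrm{Cl}(R) = \langle \sigma \rangle$ has order $1$ or $2$. Finally, Lemma 4 says $\mathfrak{m}_p$ is not principal, i.e. $\sigma \neq 1$; hence $\mathrm{Cl}(R) \cong \mathbf{Z}/2\mathbf{Z}$ and the class number is $2$. The isomorphism can be made explicit by sending a fractional ideal $\prod_i \mathfrak{m}_{p_i}^{a_i}$ to the parity $\sum_i a_i \bmod 2$: this is a well-defined surjection onto $\mathbf{Z}/2\mathbf{Z}$ that kills every principal fractional ideal, precisely because — as recorded in the proof of Lemma 4 via the intermediate value theorem — each element of $R$ has an even number of zeros counted with multiplicity.

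\noindent The substantive mathematics has already been carried out in the Theorem and Lemmas 3--4, so the one point in the Corollary that still demands care is the implication that a domain in which every nonzero proper ideal is a product of maximal ideals must be Dedekind. I would handle this by quoting the standard characterization above; alternatively, for a self-contained argument, one checks the defining properties directly. Noetherianity is easy: by the Theorem together with the corollary to Lemma 3 and Lemma 4, every nonzero ideal is either principal or of the form $\mathfrak{m}_p\,(g)$, hence generated by at most two elements. One-dimensionality is immediate from the identification of the nonzero primes with the $\mathfrak{m}_p$. The remaining, and most delicate, point is that each localization $R_{\mathfrak{m}_p}$ is a discrete valuation ring, with valuation the order of vanishing at $p$: although $\mathfrak{m}_p$ fails to be principal in $R$, it becomes principal in $R_{\mathfrak{m}_p}$, a uniformizer being any $f_{p,q}$ with $q \neq p$. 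Verifying this local uniformization is where the real work of a self-contained proof would concentrate, and it is the step I expect to require the most attention.
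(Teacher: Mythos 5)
Your proposal is correct and follows the same route as the paper, which simply cites Theorem 1, Corollary 1, and Lemma 4; you have merely supplied the details the paper leaves implicit, in particular the standard fact that a domain in which every nonzero proper ideal factors into primes is Dedekind, and the observation that the parity of $\sum_i a_i$ gives the explicit isomorphism with $\mathbf{Z}/2\mathbf{Z}$. Nothing is missing.
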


\begin{proof}
This immediately follows from Theorem 1, Corollary 1 and Lemma 4.
\end{proof}
\vskip 3mm

\noindent {\bf Remarks on Dedekind domains and class groups.} Let us
recall briefly the role of Dedekind domains in number theory.
Dedekind domains are precisely the class of integral domains in
which the fractional ideals are invertible. The rings of algebraic
integers in finite extension fields of $\mathbb{Q}$ are natural
examples of Dedekind domains. Moreover, any PID is a Dedekind
domain. The class group of a Dedekind domain is the group of
fractional ideals modulo the principal fractional ideals. A Dedekind
domain is a unique factorization domain if and only if the class
group of fractional ideals is trivial. Many subtleties involved in
solving Diophantine equations arise from the fact that many rings of
algebraic integers arising in their study have nontrivial class
group. The Fermat equation cannot be studied by elementary algebraic
methods due to the (amazing) fact that the ring of integers in the
field generated by the $p$th roots of unity for a prime $p$ is not a
UFD for any prime $p \geq 23$. By a theorem of L. Claborn (see
\cite{claborn}), every abelian group can be realized as the class
group of some Dedekind domain; the analogous problem is open for
rings of algebraic integers. In other words, it is expected but
still unknown whether every finite abelian group can be realized as
the class group of a ring of integers in an algebraic number field.
\vskip 5mm

\noindent Finally, let us prove Theorem 1. \vskip 2mm

\begin{proof}
Consider any proper, nonzero ideal $\mathcal{I}$. Let $\left\lbrace
p_1, \ldots , p_n \right\rbrace$ be the common zeros of
$\mathcal{I}$---as we observed above, this is finite, as every
nonzero analytic function on $S^1$ has only finitely many zeroes.
For each $k \leq n$, let $a_k$ be minimal among the orders of zeroes
of elements of $\mathcal{I}$ at $p_k$. Then, it is clear that
$$\mathcal{I} \subset\mathfrak m_{p_1}^{a_1} \mathfrak m_{p_2}^{a_2}
\cdots \mathfrak m_{p_n}^{a_n}.$$ \vskip 3mm

\noindent We will show that $\mathcal{I} = \prod_{k=1}^n
\mathfrak{m}_{p_k}^{a_k}.$ \vskip 2mm

\noindent Let us first assume that $a_1+ \cdots+a_n$ is even. \\
Let $f$ be an element of $\mathcal{I}$ whose order of zero at $p_k$
is $a_k$ for $1 \leq k \leq n$. Such an $f$ exists since
$\mathcal{I}$ contains elements $f_k$ vanishing at $p_k$ with order
$a_k$, and we may consider a suitable linear combination $g_1f_1 +
\cdots + g_nf_n$. This function $f$ may have other zeroes different
from $p_k$; we wish to change $f$ such that the new element is in
$\mathcal{I}$, has zeroes of order $a_k$ at $p_k$, and has no other
zeroes. This is accomplished as follows. \\
As we observed in the beginning, every analytic function changes
signs an even number of times by the intermediate value theorem. Let
us write $0 \leq p_1<p_2< \cdots < p_n < 2 \pi$. In some of the
intervals $[p_i,p_{i+1}]$ (among $[p_1,p_2],[p_2,p_3], \ldots,
[p_n,p_1]$), the function $f$ changes sign an even number of times
and in others, it changes sign an odd number of times. The latter
happens in an even number of intervals $[p_i,p_{i+1}]$. If we select
some analytic function $g$ that has simple zeroes at some interior
point of each of these latter intervals and no other zeroes, then
the function $fg \in \mathcal{I}$ and has the property that $fg$
vanishes at each $p_i$ exactly to the order $a_i$ and has an even
number of sign changes in each interval $(p_1,p_2), (p_2,p_3),
\ldots, (p_n,p_1)$. It also changes signs at an even number of the
$p_i$'s. At these even number of $p_i$'s, there is an analytic
function $h$ with simple zeroes and no other roots. We may multiply
the analytic function $h$ by an element $\phi \in \mathcal{I}$ that
has no zeroes in any of the open intervals $(p_i,p_{i+1})$ (we may
square and assume the value of $\phi$ is positive in each of these
open intervals). By changing the sign of $h$ if necessary, we may
assume it has the same sign as $f$ around each $p_i$. Then $h \phi
\in \mathcal{I}$ has simple zeroes at the $p_i$'s, no other zeroes,
and has the sign of $f$ in each open interval $(p_i,p_{i+1})$. Since
continuous (hence analytic) functions are bounded on a compact set,
therefore, for a large constant $c$, the function $fg + ch \phi$ is
in $\mathcal{I}$ and has zeroes of order $a_i$ at $p_i$ and no other
zeroes. Hence $\mathcal{I} \supseteq \prod_{i=1}^n
\mathfrak{m}_{p_i}^{a_i}$ which shows that these ideals are equal.
\vskip 3mm

\noindent Now, assume that $a_1+ \cdots + a_n$ is odd. Let $f \in
\mathfrak m_{p_1}^{a_1} \mathfrak m_{p_2}^{a_2} \cdots \mathfrak
m_{p_n}^{a_n}$. Since it must have an even number of zeros (counting
multiplicity), it must have a zero $q \not\in \left\lbrace p_1,
\ldots , p_n \right\rbrace$ or it has a zero of order greater than
$a_k $ at some $p_k$ (in which case we put $q=p_k$). Let
$\mathcal{J} = \left\lbrace g \in \mathcal{I} \vert g(q) =0
\right\rbrace$ (in case $q=p_k$, we take $g$ to have zeroes at $p_k$
with multiplicity greater than $a_k$). By the even case treated
already, $\mathcal{J} = \mathfrak m_{p_1}^{a_1} \mathfrak
m_{p_2}^{a_2} \cdots \mathfrak m_{p_n}^{a_n}\mathfrak m_{q}$. Thus
$f$ belongs to the right-hand side. Thus $f \in \mathcal{J}\subset
\mathcal{I}$. This completes the proof.
\end{proof}
\vskip 5mm

\noindent We end with a remark which is relevant to the fact that
the ring of real analytic functions on $S^1$ has class number $2$.
L. Carlitz proved (in \cite{carlitz}) that Dedekind domains with
class number $2$ are {\it half-factorial domains}; viz., different
irreducible factorizations of elements must have the same length.
Finally, we mention that we are interested in generalizations of the
above result to compact manifolds. \vskip 5mm

\begin{acknowledgment}
{Acknowledgments.} The second author was a project student at the
Tata Institute of Fundamental Research, Mumbai, India when this note
was written. The authors would like to thank the two referees for
suggestions to rewrite some parts more clearly. Interestingly, one
of the referees brought the authors' attention to a link
https://math. stackexchange.com/questions/
154605/ring-of-analytic-functions-on-the-circle which contains a
discussion of the same example studied in this note.
\end{acknowledgment}

\vskip 5mm

\vskip 5mm

\begin{affil}
University of Utah, 1015, Barbara Place, Apt-3, Salt Lake City -
84102, USA. \\Email: pandey@math.utah.edu
\end{affil}

\begin{affil}
School of Mathematics, Tata Institute of Fundamental Research, Homi
Bhabha Road, Mumbai, 400005, India. \\Email: sagars@math.tifr.res.in
\end{affil}

\begin{affil}
Statistics \& Mathematics Unit, Indian Statistical Institute, 8th
Mile Mysore Road, Bangalore 560059, India. \\ Email:
surybang@gmail.com
\end{affil}


\begin{thebibliography}{99}

\bibitem{carlitz} L. Carlitz, A characterization of algebraic
number fields with class number two, {\it Proc. Amer. Math. Soc.},
{\bf 11} (1960) 391--392.

\bibitem{claborn} L. Claborn, Every abelian group is a class group,
{\it Pacific J. Math.} {\bf 18} (1966) 219--222.

\bibitem{jensen}  C. U. Jensen, Some curiosities of rings of analytic
functions, {\it J. Pure Appl. Algebra}, {\bf 38} (1985) 277--283.


\end{thebibliography}
\end{document}